\newtheorem{theorem}{Theorem}
\newtheorem{lemma}[theorem]{Lemma}
\newtheorem{corollary}[theorem]{Corollary}
\newtheorem{remark}[theorem]{Remark}
\title{On the convergence, lock--in probability and sample complexity of stochastic approximation}
\date{}
\begin{document}

\author{Sameer Kamal\footnote{School of Technology and Computer Science,
Tata Institute of Fundamental Research, Homi Bhabha Road,
Mumbai-400005, India. E-mail: sameer.kamal@gmail.com. This work was
supported in part by an Infosys Fellowship.}}

\maketitle

\vspace{.5in}

\noindent \textbf{Abstract:} It is shown that under standard
hypotheses, if stochastic approximation iterates remain tight, they
converge with probability one to what their o.d.e.\ limit suggests.
A simple test for tightness (and therefore a.s. convergence) is
provided. Further, estimates on lock-in probability, i.e., the
probability of convergence to a specific attractor of the o.d.e.\
limit given that the iterates visit its domain of attraction, and
sample complexity, i.e., the number of steps needed to be within a
prescribed neighborhood of the desired limit set with a prescribed
probability, are also provided. The latter improve significantly
upon existing results in that they require a much weaker condition
on the martingale difference noise.

\vspace{.5in}

\noindent \textbf{Key words:} stochastic approximation, tightness of
iterates, almost sure convergence, lock--in probability, sample
complexity

\section{Introduction}

Stochastic approximation was originally introduced in \cite{robbins}
as a scheme for finding zeros of a nonlinear function under noisy
measurements. It has since become one of the main workhorses of
statistical computation, signal processing, adaptive schemes in AI
and economic models, etc. See \cite{benveniste}, \cite{borkar},
\cite{chen}, \cite{duflo}, \cite{kushner} for some recent texts that
give an extensive account. One of the successful approaches for its
convergence analysis has been the `o.d.e.\ approach' of \cite{dere},
\cite{ljung} which treats it as a noisy discretization of an
ordinary differential equation (o.d.e.) with slowly decreasing step
sizes.

The main contributions of this paper are as follows. The first
contribution concerns convergence properties. The aforementioned
convergence analysis is usually of the form: if the iterates remain
a.s.\ bounded, then they converge a.s.\ to a set predicted by the
o.d.e.\ analysis. This a.s.\ boundedness usually has to be
established separately. Here we make the simple observation that
under standard (i.e., commonly assumed) conditions, the tightness of
iterates suffices for a.s.\ convergence to the set predicted by the
o.d.e.\ analysis. A simple test for tightness is also provided.

Our second contribution concerns the lock-in probability, defined as
the probability of convergence to a specific attractor of the
o.d.e.\ if the iterates enter its domain of attraction. Under the
aforementioned standard assumptions, an estimate for this is given
in \cite{borkar}, Chapter 4, p.\ 37, using the Burkholder
inequalities. This has been improved to a much stronger estimate in
\textit{ibid.}, p.\ 41, under the strong hypothesis that suitably
re-scaled martingale difference noise remains bounded. Adapted from
\cite{borkar0}, these results suffice for the application they were
intended for, viz., reinforcement learning algorithms, but are
inadequate for other applications where such a boundedness
hypothesis may be untenable. We recover these results under a much
weaker condition that only requires the re-scaled martingale
differences to have an exponentially decaying conditional tail
probability. Further, we feel that the methodology developed in our
proof is of broader applicability and might prove useful in other
situations.

A third contribution concerns sample complexity. Originating in the
statistical learning theory literature, this notion refers to the
number of samples needed to be within a given precision of the goal
with a given probability. This literature, however,  usually deals
with i.i.d.\ input--output pairs. Here we have a recursive scheme
for which we expect the result to depend upon  the initial position
at iterate $n_0$ (say). Furthermore, the estimate is of an
asymptotic nature, which requires this $n_0$ to be `large enough'
(so that the decreasing step size has decreased sufficiently). Under
the `strong' condition of \cite{borkar0}, this was done in
\cite{borkar0} (see also \cite{borkar}, p.\ 42). We improve on this
by extending the result to the `exponential tail' case mentioned in
the previous paragraph. This, however, is a direct spin-off of the
lock-in probability estimate and follows essentially as in
\cite{borkar0}. As a source of some previous results on exponential
bounds in stochastic approximation we point out \S 6 in the survey
article \cite{walk}, and the literature cited therein.

We prove our `tightness implies convergence' result in section 3
following notational and other preliminaries in section 2. The
simple sufficient condition for tightness is given in section 4.
Section 5, the longest, is devoted to deriving the lock-in
probability estimate from which the sample complexity result of
section 6 follows easily.

\section{Preliminaries}

 Consider the $\mathbb R^d$-valued  stochastic approximation iterates
 \begin{equation}
 x_{n+1} = x_{n} + a(n)[h(x_n)+M_{n+1}], \label{SA}
 \end{equation}
and their `o.d.e.' limit
\begin{equation}
\dot{x}(t) = h(x(t)). \label{ode}
\end{equation}
We make the following assumptions regarding $h(\cdot)$, $a(n)$, and
$M_{n+1}$

\begin{itemize}
\item[\textbf{(A1)}] $h(\cdot): \mathbb R^d \rightarrow \mathbb R^d$ is Lipschitz. Thus
\[
\|h(x) - h(y)\| \leq c\| x - y \| \text{ for some } 0<c<\infty.
\]
\item[\textbf{(A2)}] The step sizes $a(n)$ are positive reals and
satisfy

\begin{itemize}
\item[(i)] $\sum_n a_n=\infty$,
\item[(ii)] $\sum_n a_n^2<\infty$, and
\item[(iii)] $\exists c>0$ such that $a(n) \leq ca(m) \ \forall n \geq m$.
\end{itemize}
\item[\textbf{(A3)}] $(M_n)$ is a martingale difference sequence w.r.t. the filtration
$(\mathcal{F}_n)$ where $\mathcal{F}_n=\sigma(x_0, M_1,\ldots,
M_n)$. Thus, $E[M_{n+1}|\mathcal{F}_n]=0$ a.s. for all $n\geq0$.
Moreover, $M_n$ is square integrable for all $n\geq 0$ with
\begin{equation}
E[\|M_{n+1}\|^2|\mathcal{F}_n]\leq c(1+\|x_n\|^2) \label{martbound}
\end{equation}
a.s. for some $0 < c < \infty$.

\end{itemize}

We next describe the setting for our problem. Let $V:\mathbb
R^d\rightarrow[0,\infty)$ be a differentiable, nonnegative potential
or `Liapunov function' satisfying $\lim_{\|x\|\uparrow\infty}V(x) =
\infty$ and $\dot{V}(x):=\nabla V(x)\cdot h(x) \leq 0 \ \forall x$.
 Define $H:=\{x:\nabla V(x)\cdot h(x) = 0\}$ and assume that this coincides with $\{x:V(x)=0\}$. Note that
$H$ is compact. Under these assumptions, $H$ is an asymptotically
stable, positively invariant set of the limiting o.d.e. (\ref{ode}).
Let $B$ be an arbitrary bounded open set  such that $H\subset B$.
Consider the convergence probability $\mathbb P[x_n\rightarrow
H|x_{n_0}\in B]$ for some  $n_0$. By Theorem 8 of \cite{borkar}, p.\
37, under assumptions (A1)-(A3) the convergence probability
satisfies
\begin{equation}
\mathbb P[x_n\rightarrow H|x_{n_0}\in B] \rightarrow 1 \text{ as }
n_0 \rightarrow \infty. \label{tight-convergence}
\end{equation}
The convergence results of our paper are as follows:
\begin{itemize}
\item If the iterates $\{x_n\}$ are tight and
(\ref{tight-convergence}) holds then the iterates will converge to
$H$ with probability $1$.
\item If the Liapunov function grows exactly quadratically
outside a compact set, then the iterates $\{x_n\}$ are tight.
\end{itemize}

Combining the two, if the Liapunov function grows exactly
quadratically outside a compact set and assumptions (A1)-(A3) hold,
then the iterates will converge to $H$ almost surely.

One is often interested in the `lock--in' probability of a specific
attractor, denoted $H$ again by abuse of notation, of the limiting
o.d.e (\ref{ode}), i.e., the probability of convergence to $H$ given
that the iterates $\{x_n\}$ land up in its domain of attraction
after sufficiently long time. In this spirit,  Theorem 8 of
\cite{borkar}, p.\ 37, shows that $\mathbb P[x_n\rightarrow
H|x_{n_0}\in B] = 1 - O(b(n_0)),$ where $b(n_0) :=
\sum_{m=n_0}^{\infty}a(m)^2$, and $B$ is a bounded open set
contained in the domain of attraction of $H$.
 In this paper we give the following stronger results:

\begin{itemize}
\item Assuming that the scaled martingale difference $\|M_{n+1}\|/(1+\|x_n\|)$ has
 exponentially decaying conditional tail probability, we show that
\[
 \mathbb P\left[x_n\rightarrow H|x_{n_0}\in B\right] = 1 - O\left(e^{-\frac{c}{\sqrt[4]{b(n_0)}}}\right)
\]
  as $n_0 \rightarrow \infty$.
\item As a corollary to the above result we also state a sample complexity result
wherein the `probability of error' is
$O\left(e^{-\frac{c}{\sqrt[4]{b(n_0)}}}\right)$.
\end{itemize}

Similar results are proved in \cite{borkar}, pp.\ 38-41, but under a
much stronger hypothesis, viz., that the scaled martingale
difference sequence above is in fact bounded. This is too
restrictive for many applications.

Finally, before we start our calculations, a remark on notation: in
what follows the letter $c$ may denote a different constant in
different lines. A similar remark applies to the letters $c_1$ and
$c_2$ too.

\section{Convergence for tight iterates}

In this section we relate tightness of the iterates to their almost
sure convergence to $H$. Recall that the iterates $\{x_n\}$ are
tight if given an arbitrary $\epsilon > 0$, there exists a compact
set $K \subset \mathbb R^d$ such that
\[
\mathbb P\left[ x_n \in K \right] > 1 - \epsilon \ \forall n.
\]

\begin{theorem}
\label{result} Assume that the iterates $\{x_n\}$ are tight and  (\ref{tight-convergence}) holds for
any bounded open set $B$ containing $H$. Then
almost surely $x_n \rightarrow H$ as $n \rightarrow \infty$.
\end{theorem}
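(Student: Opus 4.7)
The strategy is to combine the uniform-in-$n$ control given by tightness with the hypothesis (\ref{tight-convergence}), which says that ``once the iterate is trapped in a bounded neighborhood of $H$ at a sufficiently late time, it converges to $H$ with probability arbitrarily close to one.'' Tightness will supply, for each prescribed $\epsilon>0$, a single bounded region which catches $x_{n_0}$ with probability at least $1-\epsilon$ \emph{regardless} of how large $n_0$ is; (\ref{tight-convergence}) will then be applied on that region.

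Concretely, I would fix $\epsilon>0$ and, using tightness, choose a compact set $K\subset\mathbb R^d$ with $\mathbb P[x_n\in K]>1-\epsilon$ for every $n$. Because $H$ is compact, the set $K\cup H$ is compact as well, so I can pick a bounded open set $B$ containing $K\cup H$; this $B$ satisfies the hypothesis of (\ref{tight-convergence}) and moreover $\{x_{n_0}\in K\}\subseteq\{x_{n_0}\in B\}$, giving $\mathbb P[x_{n_0}\in B]>1-\epsilon$ for \emph{all} $n_0$.

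Next, I would invoke (\ref{tight-convergence}) for this particular $B$: there exists $n_0$ large enough that $\mathbb P[x_n\to H\mid x_{n_0}\in B]>1-\epsilon$. Chaining the two estimates,
\[
\mathbb P[x_n\to H]\;\geq\;\mathbb P[x_n\to H,\,x_{n_0}\in B]\;=\;\mathbb P[x_n\to H\mid x_{n_0}\in B]\,\mathbb P[x_{n_0}\in B]\;>\;(1-\epsilon)^2\;>\;1-2\epsilon.
\]
Since $\epsilon$ was arbitrary, $\mathbb P[x_n\to H]=1$, which is the desired almost sure convergence.

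There is no real ``hard step'' here: the argument is purely a bookkeeping combination of the two hypotheses. The one subtlety to be careful about is to make sure the set $B$ produced from the tight compact $K$ actually satisfies the standing requirement of (\ref{tight-convergence}) (namely that it be a bounded open set containing $H$), which is why one enlarges $K$ to an open neighborhood of $K\cup H$ rather than using $K$ directly.
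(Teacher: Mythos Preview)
Your proof is correct and essentially identical to the paper's own argument: pick $\epsilon>0$, use tightness to get a compact $K$ catching every $x_n$ with probability at least $1-\epsilon$, enlarge to a bounded open $B\supset K\cup H$, and multiply $\mathbb P[x_{n_0}\in B]$ by the conditional convergence probability from (\ref{tight-convergence}). The only cosmetic difference is that the paper lets $n_0\to\infty$ to obtain $\mathbb P[x_n\to H]\geq 1-\epsilon$ directly, whereas you fix a large $n_0$ and get $(1-\epsilon)^2>1-2\epsilon$; both lead to the same conclusion.
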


\begin{proof}
Pick an arbitrary $\epsilon > 0$. Because of tightness there exists
a compact set $K$ such that
\[
\mathbb P\left[ x_n \in K \right] > 1 - \epsilon, \ \forall n.
\]
Now choose a bounded open set $B$ such that $K, H \subset B$. Clearly
\[
\mathbb P\left[ x_n \in B \right] > 1 - \epsilon,  \ \forall n.
\]
Also, by assumption, we have
\[
\mathbb P[x_n\rightarrow H|x_{n_0}\in B] \rightarrow 1 \text{ as }
n_0 \rightarrow \infty.
\]
Combining the two we get
\begin{eqnarray}
\nonumber \mathbb P[x_n\rightarrow H] &\geq& \mathbb P\left[ x_{n_0}
\in B \right]\mathbb P[x_n\rightarrow H|x_{n_0}\in B]\\
\nonumber &>& (1 - \epsilon)\mathbb P[x_n\rightarrow H|x_{n_0}\in
B].
\end{eqnarray}
The left hand side above is independent of $n_0$. Therefore, letting
$n_0\rightarrow \infty$ in the right hand side we get
\[
\mathbb P[x_n\rightarrow H] \geq 1 - \epsilon.
\]
But $\epsilon$ itself was arbitrary. It follows that
\[
\mathbb P[x_n\rightarrow H] = 1.
\]
\end{proof}

\begin{corollary}
Under assumptions (A1)-(A3), if the iterates $\{x_n\}$ are tight
then $x_n \rightarrow H$ a.s.
\end{corollary}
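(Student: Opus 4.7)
The plan is to invoke Theorem~\ref{result} directly. That theorem has exactly two hypotheses: tightness of $\{x_n\}$, and the convergence probability condition (\ref{tight-convergence}) for every bounded open set $B$ containing $H$. The first hypothesis is exactly what is being assumed in the corollary, so the only thing left to supply is the second.

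For the second hypothesis, I would simply appeal to the statement already quoted in Section~2: under (A1)--(A3), together with the Liapunov function setup fixed in the preliminaries (a differentiable, nonnegative, radially unbounded $V$ with $\dot V \le 0$ and $\{V=0\}=\{\nabla V\cdot h=0\}=H$), Theorem~8 of \cite{borkar}, p.~37, yields $\mathbb P[x_n\to H \mid x_{n_0}\in B]\to 1$ as $n_0\to\infty$ for any bounded open $B\supset H$. Combined with the assumed tightness, Theorem~\ref{result} then immediately delivers $x_n\to H$ almost surely.

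There really is no obstacle to surmount here; the corollary is a packaging statement whose content is carried entirely by (i)~the tightness-implies-convergence observation made in Theorem~\ref{result} and (ii)~the standard convergence probability result invoked above. If I wanted to be pedantic I would spend one line pointing out that Theorem~\ref{result} requires (\ref{tight-convergence}) to hold for \emph{every} admissible $B$, and that this is indeed what the cited Borkar theorem provides since the bounded open set $B\supset H$ in its statement is arbitrary. Beyond that, nothing further is required.
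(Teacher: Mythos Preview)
Your proposal is correct and matches the paper's own proof essentially verbatim: the paper's argument is the single sentence ``This is immediate from Theorem~\ref{result} and the fact that (\ref{tight-convergence}) holds under assumptions (A1)--(A3) by Theorem~8 of \cite{borkar}, p.~37.'' Your additional remark about the arbitrariness of $B$ is accurate and harmless.
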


\begin{proof}
This is immediate from Theorem \ref{result} and the fact that
(\ref{tight-convergence}) holds under assumptions (A1)-(A3) by
Theorem 8 of \cite{borkar}, p.\ 37.
\end{proof}

\section{A condition for tightness}

In this section we show that if the Liapunov function grows
`exactly' quadratically outside some compact set then the iterates
are tight. More precisely, we assume that the Liapunov function
$V(\cdot)$ satisfies the following:
\begin{itemize}
\item[\textbf{(A4)}] $V(\cdot)$ is twice differentiable and all second order derivatives are bounded by some
constant. Thus, $|\partial_i\partial_j V(x)| < c$ for all $i, j$ and
$x$.
\item[\textbf{(A5)}] $\|x\|^2 < c(1 + V(x))$ for all $x$ and some $0 < c < \infty$.
\end{itemize}

\begin{theorem}
\label{result2} Under (A4),(A5) the iterates $\{x_n\}$ are tight.
\end{theorem}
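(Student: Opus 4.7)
The plan is to show that $\sup_n E[V(x_n)] < \infty$ and then read off tightness from (A5) via Markov's inequality. This reduces everything to tracking how $V$ behaves along the recursion.

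First I would Taylor-expand $V$ to second order along the update step: writing $\Delta_n := x_{n+1}-x_n = a(n)[h(x_n)+M_{n+1}]$, there is some (random) $\xi_n$ on the segment joining $x_n$ and $x_{n+1}$ such that
\[
V(x_{n+1}) = V(x_n) + \nabla V(x_n)\cdot \Delta_n + \tfrac{1}{2}\Delta_n^{T}\nabla^{2}V(\xi_n)\Delta_n.
\]
Conditioning on $\mathcal{F}_n$, the linear noise piece $a(n)\nabla V(x_n)\cdot M_{n+1}$ vanishes by (A3), and the remaining linear piece is $a(n)\nabla V(x_n)\cdot h(x_n)\leq 0$ by the Liapunov assumption. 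So I only need to control the Taylor remainder.

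For the remainder I would use (A4) to bound $\|\nabla^{2}V(\xi_n)\|$ by a constant, giving $|\Delta_n^{T}\nabla^{2}V(\xi_n)\Delta_n|\leq c\|\Delta_n\|^{2}$. Then I expand $\|\Delta_n\|^{2}\leq 2a(n)^{2}(\|h(x_n)\|^{2}+\|M_{n+1}\|^{2})$, use the Lipschitz bound $\|h(x_n)\|\leq \|h(0)\|+c\|x_n\|$ to get $\|h(x_n)\|^{2}\leq c(1+\|x_n\|^{2})$, and use the martingale bound \eqref{martbound} to get $E[\|M_{n+1}\|^{2}|\mathcal{F}_n]\leq c(1+\|x_n\|^{2})$. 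Applying (A5) to replace $1+\|x_n\|^{2}$ by $c(1+V(x_n))$, I obtain the key recursion
\[
E[V(x_{n+1})\mid \mathcal{F}_n] \leq V(x_n) + c\,a(n)^{2}\bigl(1+V(x_n)\bigr).
\]

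Taking expectations yields $E[V(x_{n+1})]\leq (1+c\,a(n)^{2})E[V(x_n)] + c\,a(n)^{2}$, and iterating this recursion while invoking $\sum_n a(n)^{2}<\infty$ from (A2) gives the uniform bound $\sup_n E[V(x_n)]\leq (V(x_0)+c\sum_n a(n)^{2})\exp(c\sum_n a(n)^{2})<\infty$. Finally, Markov's inequality together with (A5) gives, for any $R>0$, $\mathbb P[\|x_n\|>R]\leq \mathbb P[V(x_n)> R^{2}/c-1]\leq (c/R^{2})(c+\sup_m E[V(x_m)])$, uniformly in $n$. Letting $R\uparrow\infty$ produces compact sets $K_\epsilon=\{\|x\|\leq R_\epsilon\}$ that witness tightness. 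The only place that really requires care is the Taylor remainder, where the uniform $C^{2}$-bound (A4) is essential to convert the quadratic-growth hypothesis (A5) into a genuine Gronwall-type estimate on $E[V(x_n)]$; if $V$ were allowed to be super-quadratic, the remainder could dominate the nonpositive drift and the argument would break.
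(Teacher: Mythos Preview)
Your proposal is correct and follows essentially the same route as the paper: a second-order Taylor expansion of $V$ along the update, dropping the nonpositive drift and the martingale cross-term, bounding the remainder via (A4), (A1), \eqref{martbound} and (A5) to get $E[V(x_{n+1})\mid\mathcal{F}_n]\leq V(x_n)+c\,a(n)^2(1+V(x_n))$, and then iterating to a uniform bound on $E[V(x_n)]$. The only cosmetic differences are that you make the intermediate point $\xi_n$ in the Taylor remainder explicit and spell out the final Markov-inequality step, while the paper assumes $E[V(x_0)]<\infty$ explicitly and writes the iteration as $1+E[V(x_{n+1})]\leq e^{c\sum a(i)^2}(1+E[V(x_0)])$.
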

\begin{proof}
Without loss of generality, let $E[V(x_0)] < \infty$. Consider
(\ref{SA}), the equation for the iterates. Doing a Taylor expansion
and the using fact that the second order derivatives of $V$ are
bounded, we get
\[
V(x_{n+1}) \leq V(x_n) + a(n)\nabla V(x_n)\cdot[h(x_n)+ M_{n+1}] +
ca(n)^2 \|h(x_n)+ M_{n+1}\|^2.
\]
Since $\nabla V(x_n)\cdot h(x_n)\leq 0$, this yields
\[
V(x_{n+1}) \leq V(x_n) + a(n)\nabla V(x_n)\cdot M_{n+1} + ca(n)^2
\|h(x_n)+ M_{n+1}\|^2.
\]
Lipschitz continuity of $h(\cdot)$ gives us the following bound
\begin{eqnarray}
\nonumber \|h(x_n)+ M_{n+1}\|^2 &=& \|h(x_n)\|^2 + \|M_{n+1}\|^2 + 2
h(x_n)\cdot M_{n+1}\\
\nonumber &<& c\left(1+\|x_n\|^2\right) + \|M_{n+1}\|^2 + 2
h(x_n)\cdot M_{n+1}.
\end{eqnarray}
This leads to
\begin{eqnarray*}
\lefteqn{V(x_{n+1}) < V(x_n) + } \\ &&a(n)\nabla V(x_n)\cdot M_{n+1} + ca(n)^2\left[
\left(1+\|x_n\|^2\right) + \|M_{n+1}\|^2 +  2h(x_n)\cdot
M_{n+1}\right].
\end{eqnarray*}
 Taking conditional
expectation and using (\ref{martbound}) gives
\[
\mathbb E [V(x_{n+1})| \mathcal F_n] < V(x_n) + ca(n)^2 (1 +
\|x_n\|^2).
\]
By (A5), this can be written as
\[
\mathbb E [V(x_{n+1})| \mathcal F_n] < V(x_n) + ca(n)^2 (1+ V(x_n)).
\]
Taking expectations we get
\[
\mathbb E[V(x_{n+1})] < \mathbb E[V(x_n)] + ca(n)^2 (1+ \mathbb
E[V(x_n)]).
\]
This gives
\begin{eqnarray}
\nonumber 1 + \mathbb E[V(x_{n+1})] &<& 1 + \mathbb E[V(x_n)] +
ca(n)^2 (1+ \mathbb E[V(x_n)])\\
\nonumber &=& (1 + ca(n)^2)\left(1 + \mathbb E[V(x_n)]\right)\\
\nonumber &<& \exp(ca(n)^2)\left(1 + \mathbb E[V(x_n)]\right)\\
\nonumber &<& \exp\left(c\sum_{i=0}^\infty a(i)^2\right)\left(1 +
\mathbb E[V(x_0)]\right).
\end{eqnarray}
Since $1 + \mathbb E[V(x_{n+1})]$ is bounded by a constant
independent of $n$, it follows that the iterates are tight.
\end{proof}

\begin{corollary}
Under assumptions (A1)-(A5), we have
\[
\mathbb P[x_n \rightarrow H] = 1.
\]
\end{corollary}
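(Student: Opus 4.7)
The plan is essentially to observe that this corollary is an immediate synthesis of the two main results already established in the paper, and the work is to state that synthesis cleanly. Specifically, Theorem \ref{result2} tells us that assumptions (A4) and (A5) are sufficient to conclude that the iterates $\{x_n\}$ are tight, while the corollary following Theorem \ref{result} tells us that under (A1)--(A3), tightness of $\{x_n\}$ already implies $x_n \to H$ almost surely.

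So my proof would proceed in two short steps. First, invoke Theorem \ref{result2}: since we are assuming (A4) and (A5), the iterates $\{x_n\}$ are tight. Second, invoke the corollary to Theorem \ref{result}: since we are also assuming (A1)--(A3), the tightness just established implies $\mathbb P[x_n \to H] = 1$. No additional calculation is required, and there is no real obstacle — the content has already been fully done; this corollary is just the headline combining the two pieces advertised in the introduction (``if the Liapunov function grows exactly quadratically outside a compact set and assumptions (A1)--(A3) hold, then the iterates will converge to $H$ almost surely'').

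The only thing worth being slightly careful about is verifying that the hypotheses line up: Theorem \ref{result2}'s proof uses (A1), (A3), and (A5) in addition to (A4) (Lipschitz continuity of $h$ and the bound (\ref{martbound}) on the martingale differences are used in bounding $\|h(x_n)+M_{n+1}\|^2$ and in taking conditional expectations), and the condition (\ref{tight-convergence}) needed for the corollary to Theorem \ref{result} holds under (A1)--(A3) by Theorem 8 of \cite{borkar}. So the joint assumption (A1)--(A5) comfortably supplies everything both ingredients need, and the conclusion follows in one line.
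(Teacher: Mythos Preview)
Your proposal is correct and matches the paper's own proof essentially line for line: the paper simply cites Theorem~\ref{result}, Theorem~\ref{result2}, and the fact that (\ref{tight-convergence}) holds under (A1)--(A3) by Theorem~8 of \cite{borkar}. Your routing through the corollary to Theorem~\ref{result} rather than Theorem~\ref{result} itself is an equivalent repackaging of the same two-step argument.
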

\begin{proof}
This follows from Theorem \ref{result}, Theorem \ref{result2}, and
the fact that (\ref{tight-convergence}) holds under assumptions
(A1)-(A3) by Theorem 8 of \cite{borkar}, p.\ 37.
\end{proof}

\section{Lock-in probability}

In this section we give a lower bound for $\mathbb P[x_n\rightarrow
H|x_{n_0}\in B]$ in terms of $b(n_0)$ when $n_0$ is sufficiently
large. How large $n_0$ needs to be will depend on the choice of $B$,
among other things. Before we proceed further we fix some notation
and recall some known results.

Choose an arbitrary finite $T$ from the interval $(0, \infty)$ and
hold it fixed for the rest of the analysis. Let $t(n)= \sum_{i=0}^{n
- 1}a(i)$. Let $n_0 \geq 0, n_i = \min \{n:t(n)\geq t(n_{i-1})+T\}$.
Define $\bar x(t)$ by: $\bar{x}(t(n)) = x_n$, with linear
interpolation on $[t(n), t(n+1)]$ for all $n$.
 Let $x^{t(n_i)}(\cdot)$ be the solution of the limiting o.d.e. (\ref{ode}) on $[t(n_i), t(n_{i+1}))$ with
 the initial condition $x^{t(n_i)}(t(n_i))=\bar x (t(n_i)) =  x_{n_i}$.
  Let
 \[
 \rho_i := \sup_{t \in [t(n_i), t(n_{i+1}))} \|\bar x(t) - x^{t(n_i)}(t)\|.
 \]

 We recall here a few results from \cite{borkar}. As shown there (\cite{borkar}, pp.\ 32-33), there exists a  $\delta_B > 0$ such that
  if $x_{n_i}\in B$ and $\rho_i < \delta_B$ then $x_{n_{(i+1)}}\in B$,
  too. It is also known (\cite{borkar}, section 2.1, p.\ 16) that if the sequence of iterates
 $\{x_n\}$ remains bounded almost surely on a prescribed set of
 sample points, then it converges almost surely on this
 set to $H$. Combining the two facts gives us the following estimate on the
 probability of convergence, conditioned on $x_{n_0}\in B$ (\cite{borkar}, Lemma 1, p.\ 33)
 \[
 \mathbb{P}\left[\bar x(t)\rightarrow H | x_{n_0}\in B\right] \geq
 \mathbb{P}\left[\rho_i < \delta_B \forall i \geq 0 | x_{n_0}\in B
 \right].
 \]
Let $\mathcal{B}_i$ denote the event that $x_{n_0}\in B$ and
 $\rho_k < \delta_B$ for $k=0,1,\ldots,i$. We get the following lower bound
 for the above probability (\cite{borkar}, Lemma 2, p.\ 33)
 \[
 \mathbb{P}\left[\rho_i < \delta_B \ \forall i \geq 0 | x_{n_0}\in B
 \right]\geq 1-\sum_{i=0}^\infty \mathbb{P}\left[\rho_i \geq \delta_B |\mathcal{B}_{i-1} \right].
 \]
For $n_0$ sufficiently large, this in turn can be bounded as
\[
\mathbb{P}\left[\rho_i \geq \delta_B |\mathcal{B}_{i-1} \right] \leq
\mathbb{P}\left[\max_{0 \leq j < n_{(i+1)}-n_i}
\left.\left\|\sum_{m=0}^{j}a(n_i+m)M_{n_i+m+1}\right\|
> \delta\right|\mathcal{B}_{i-1}\right],
\]
where $\delta=\delta_B/2K_T$, with $K_T$ being a constant that
depends only on $T$ (\cite{borkar}, Lemma 3, p.\ 34).

Thus the probability of convergence, $\mathbb P[x_n\rightarrow
H|x_{n_0}\in B]$, is lower bounded by the following expression
\[
1 - \sum_{i=0}^{\infty}\mathbb{P}\left[\max_{0 \leq j <
n_{(i+1)}-n_i}
\left.\left\|\sum_{m=0}^{j}a(n_i+m)M_{n_i+m+1}\right\|
> \delta\right|\mathcal{B}_{i-1}\right].
\]

In this section we show that $1-\mathbb P[x_n\rightarrow
H|x_{n_0}\in B]$, or the `error probability',  decays exponentially
in $1/\sqrt[4]{b(n_0)}$ provided the scaled martingale difference
terms, $\|M_{i+1}\|/(1+\|x_i\|)$, have  exponentially decaying
conditional tail
 probability. Specifically, we assume that
\begin{equation}
\mathbb{P}\left[\left. \frac{\|M_{i+1}\|}{1 + \|x_i\|} >
v\right|\mathcal F_i\right] \leq C_1\exp(-C_2v), \label{tail}
\end{equation}
for $v$ large enough and for $C_1$ and $C_2$ some positive
constants.

Before we move on to our analysis we introduce a step size
assumption that significantly simplifies our calculations and which
we shall assume for the remainder of this section.

\subsection{A step size assumption}

We  assume that the step sizes $a(i)$ decrease only in `Lipschitz'
fashion. By this we mean that there is a positive constant
$\gamma_T$ depending only on $T$ such that if $a(n_i+m_1)$ and
$a(n_i+m_2)$ are two arbitrary time steps from the same interval
$[t(n_i), t(n_{i+1}))$ then
\begin{equation}
\frac{a(n_i+m_1)}{a(n_i+m_2)} < \gamma_T. \label{stepassume}
\end{equation}

Define $a_{\max} := \sup_n a(n)$. Since $\sum a(n)^2 < \infty$, it
follows that $a_{\max}<\infty$. The next lemma shows
that~(\ref{stepassume}) holds for a large class of step sizes.
\begin{lemma}
\label{reviewer}
 Consider step sizes of the form
\[
a(n)=\frac 1 {n^\alpha(\log n)^\beta},
\]
where either $\alpha\in(1/2,1)$ or $\alpha=1, \beta\leq 0$. For such
step sizes there exists a positive constant $\gamma_T$ depending
only on $T$ such that two arbitrary time steps from the same
interval $[t(n_i), t(n_{i+1}))$ satisfy~(\ref{stepassume}).
\end{lemma}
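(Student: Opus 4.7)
The plan is to bound the supremum
\[
\gamma_T := \sup_i \sup_{m_1,m_2} \frac{a(n_i + m_1)}{a(n_i + m_2)},
\]
where the inner supremum ranges over pairs with $n_i + m_1, n_i + m_2 \in [n_i, n_{i+1})$. For both allowed parameter ranges, $a(\cdot)$ is monotonically decreasing for all sufficiently large $n$, so the extremal ratio inside a single interval is (eventually) $a(n_i)/a(n_{i+1}-1)$. By construction of $n_{i+1}$, the partial sum satisfies
\[
T \leq \sum_{k=n_i}^{n_{i+1}-1} a(k) < T + a_{\max},
\]
and this is the only handle I have on the length of the interval. Bounding this partial sum therefore controls how much $n_{i+1}$ may exceed $n_i$, and hence bounds $a(n_i)/a(n_{i+1}-1)$.

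For $\alpha \in (1/2,1)$, I would compare $\sum_{k=n_i}^{n_{i+1}-1} 1/(k^\alpha(\log k)^\beta)$ to the integral $\int_{n_i}^{n_{i+1}} dx/(x^\alpha(\log x)^\beta)$, which asymptotically equals $(n_{i+1}^{1-\alpha} - n_i^{1-\alpha})/((1-\alpha)(\log n_i)^\beta)$. Combined with the upper bound $T + a_{\max}$ this gives $n_{i+1}^{1-\alpha} - n_i^{1-\alpha} \leq c(\log n_i)^\beta$, so $n_{i+1}/n_i \to 1$ as $i\to\infty$. Substituting into $a(n_i)/a(n_{i+1}-1) = (n_{i+1}-1)^\alpha(\log(n_{i+1}-1))^\beta/(n_i^\alpha(\log n_i)^\beta)$ then forces the ratio to tend to $1$.

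For $\alpha = 1$, $\beta \leq 0$, the analogous integral comparison gives $(\log n_{i+1})^{1-\beta} - (\log n_i)^{1-\beta} \leq c$, so $\log(n_{i+1}/n_i)$ is uniformly bounded in $i$ (and tends to $0$ when $\beta < 0$). Substituting into $a(n_i)/a(n_{i+1}-1) = (n_{i+1}-1)(\log(n_{i+1}-1))^\beta/(n_i(\log n_i)^\beta)$ then yields a uniform bound, with the ratio approaching $1$ if $\beta<0$ and staying below $\exp(c(T+a_{\max}))$ if $\beta=0$.

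In both cases the asymptotic analysis is valid only for $i \geq i_0$ with $n_{i_0}$ sufficiently large. For the finitely many intervals with $i < i_0$, each containing finitely many indices, the ratios are trivially bounded by $a_{\max}/\min_{k < n_{i_0}} a(k)$, and $i_0$ depends only on $T$ (through the asymptotic constants). Taking $\gamma_T$ to be the maximum of the asymptotic bound and this finite-initial-segment bound gives the desired constant. I expect the main technical obstacle to be making the sum-to-integral comparison sufficiently quantitative to deduce $n_{i+1}/n_i \to 1$ uniformly, especially in carefully tracking the logarithmic factor when $\beta \neq 0$; this is routine but must be done separately in each of the two cases.
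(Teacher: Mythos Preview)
Your approach is correct and close in spirit to the paper's, but the execution differs in a useful way. You evaluate the integral $\int_{n_i}^{n_{i+1}} x^{-\alpha}(\log x)^{-\beta}\,dx$ asymptotically, pulling out the logarithm as approximately constant, and from this deduce the sharper conclusion $n_{i+1}/n_i\to 1$. The paper instead replaces the integrand by cruder lower bounds: for $\alpha\in(1/2,1)$ it uses $1/(s\log s)$ and $1/s^\nu$ (with $\alpha<\nu<1$) to obtain, separately and directly, that $\log n_{i+1}/\log n_i$ and $n_{i+1}/n_i$ are bounded---which is exactly what is needed to bound $a(n_i)/a(n_{i+1}-1)=(n_{i+1}/n_i)^\alpha(\log n_{i+1}/\log n_i)^\beta$. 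For $\alpha=1$, $\beta\le 0$ the paper simply drops the $(\log s)^{-\beta}\ge 1$ factor and integrates $1/s$. The advantage of the paper's route is that it sidesteps the mild circularity you flag at the end: your formula $(n_{i+1}^{1-\alpha}-n_i^{1-\alpha})/((1-\alpha)(\log n_i)^\beta)$ already presumes $\log n_{i+1}\sim\log n_i$, which in turn needs a preliminary crude bound on $n_{i+1}/n_i$. Your remark that this is ``routine'' is accurate---one pass with the $1/s^\nu$ comparison would supply it---but the paper's two-inequality trick avoids the bootstrap entirely. Your version, on the other hand, yields the stronger statement that the ratio of step sizes within a block tends to~$1$, not merely that it stays bounded.
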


\begin{proof}
We need to show that for $n_1<n_2$, if $a(n_1) + \cdots + a(n_2) < T
+ a_{\max}$ then for $n_1$ sufficiently large, there exists a
constant $\gamma_T$, depending only on $T$, such that
$a(n_1)/a(n_2)<\gamma_T$. Since $\int_{n_1}^{n_2}a(s)\mathrm ds \leq
a(n_1) + \cdots + a(n_2)$, it suffices to show that there exists a
constant $\gamma_T$ such that, for $n_1$ sufficiently large,
 $\int_{n_1}^{n_2}a(s)\mathrm ds \leq T + a_{\max}$ implies $a(n_1)/a(n_2)<\gamma_T$.
We consider the two cases separately.
\begin{itemize}
\item $\alpha\in(1/2,1)$.

The result follows easily from the following two inequalities which
hold for $n_1$ sufficiently large
\[
\int_{n_1}^{n_2}1/s^\alpha(\log s)^\beta\mathrm ds \geq
\int_{n_1}^{n_2}1/s(\log s)\mathrm ds =
\log{\left(\log{n_2}/\log{n_1}\right)},
\]
and
\[
\int_{n_1}^{n_2}1/s^\alpha(\log s)^\beta\mathrm ds \geq
\int_{n_1}^{n_2}1/s^\nu\mathrm ds =
\frac{n_2^{1-\nu}-n_1^{1-\nu}}{1-\nu} \geq
\frac{(n_2/n_1)^{1-\nu}-1}{1-\nu},
\]
where $\alpha<\nu<1$.
\item $\alpha=1,\beta\leq 0$.

The result follows easily from the following  inequality
\[
\int_{n_1}^{n_2}1/s(\log s)^\beta\mathrm ds \geq
\int_{n_1}^{n_2}1/s\mathrm ds = \log{(n_2/n_1)}.
\]
\end{itemize}

\end{proof}

For $0 \leq m < n_{(i+1)}-n_i-1$, the step size assumption implies
\[
\frac{a(n_i)}{\gamma_T}  \leq a(n_i+m) \leq \gamma_T  a(n_i).
\]
As a result
\[ \frac T {\gamma_T a(n_i)} \left( \frac {a(n_i)}
{\gamma_T} \right)^2 < \sum_{m=0}^{n_{(i+1)}-n_i-1} a(n_i+m)^2  <
\frac{T \gamma_T}{a(n_i)} ({a(n_i)} {\gamma_T} )^2,
\]
whereby
\begin{displaymath}
\label{a(ni)} \sum_{m=0}^{n_{(i+1)}-n_i-1} a(n_i+m)^2 =
\Theta(a(n_i)),
\end{displaymath}
and
\begin{displaymath}
\label{sum a(ni)} b(n_0) = \sum_{i=0}^{\infty}\left(
\sum_{m=0}^{n_{(i+1)}-n_i-1} a(n_i+m)^2 \right) = \Theta\left(
\sum_{i=0}^{\infty}a(n_i) \right).
\end{displaymath}
\begin{remark}
\label{remark} By virtue of the first of the above two equations, we
can use the notationally simpler  term $a(n_i)$ as a proxy for  the
sum $\sum_{m=0}^{n_{(i+1)}-n_i-1} a(n_i+m)^2 $ for obtaining order
estimates. Indeed, in the remainder of this paper we shall
repeatedly do so.
\end{remark}

\subsection{Bounding the error probability}

It will be notationally convenient at this point to introduce
$\zeta_{n_i+j}$ to denote, for an arbitrary $i$, the martingale with
indexing starting at $n_i$ defined as
\[
\zeta_{n_i+j} := \left\{
\begin{array}{cl}
\nonumber 0 & \text{ if } j
= 0,\\
\nonumber  \sum_{m=0}^{j-1}a(n_i+m)M_{n_i+m+1} & \text{ if }  0 < j
\leq n_{(i+1)}-n_i.
\end{array}
\right.
\]
Recall that we seek a bound for $1 - \sum_i \mathbb{P}\left[\max_{0
\leq j \leq n_{(i+1)}-n_i} \|\zeta_{n_i+j}\|
> \delta|\mathcal{B}_{i-1}\right]$. As a first step we bound the
following single term
\[
\mathbb{P}\left[\max_{0 \leq j \leq n_{(i+1)}-n_i} \|\zeta_{n_i+j}\|
> \delta|\mathcal{B}_{i-1}\right].
\]

Our analysis for deriving a bound requires first suitably stopping
the martingale $\zeta_{n_i+j}$, then projecting the stopped
martingale onto a coordinate axis to obtain a  $\mathbb R$-valued
martingale, and finally truncating the difference terms for this
martingale.

Define the stopping time
\[
\tau := \inf\left\{n_i + j:
\left\|\zeta_{n_i+j}\right\|_\infty>\delta/\sqrt d\right\}\wedge
n_{(i+1)}.
\]
Let $\zeta_{n_i+m}^\tau$ denote the stopped martingale
$\zeta_{(n_i+m)\wedge\tau}$. Similarly, let  $M_{n_i+m+1}^\tau$
denote $M_{(n_i+m+1)\wedge\tau}$. We can write
\[
\zeta_{n_i+j}^{\tau} = \sum_{m=0}^{j-1}a(n_i+m)M_{n_i+m+1}^{\tau}.
\]

Let $\mathcal P_z(\cdot)$ denote the projection operator projecting
onto the $z^{\text{th}}$ coordinate. Note that
\begin{eqnarray}
\nonumber && \mathbb{P}\left[\left.\max_{0 \leq j \leq
n_{(i+1)}-n_i} \left\|\zeta_{n_i+j}\right\|
> \delta\right|\mathcal{B}_{i-1}\right]\\
\nonumber &\leq& \mathbb{P}\left[\max_{0 \leq j \leq n_{(i+1)}-n_i}
\left.\left\|\zeta_{n_i+j}\right\|_\infty
> \delta/\sqrt d\right|\mathcal{B}_{i-1}\right]\\
\nonumber &=&
\mathbb{P}\left[\left.\|\zeta_{n_{(i+1)}}^{\tau}\|_\infty
> \delta/\sqrt d\right|\mathcal{B}_{i-1}\right]\\
\label{eqnarray}&\leq& \sum_z \mathbb{P}\left[ \left.\left|\mathcal
P_z\left(\zeta_{n_{(i+1)}}^{\tau}\right)\right|
> \delta/\sqrt d\right|\mathcal{B}_{i-1}\right]
\end{eqnarray}

We'll show that for $n_0$ sufficiently large the following bound
holds.
\[
\mathbb{P}\left[\left. \left| \mathcal P_z
\left(\zeta_{n_{(i+1)}}^{\tau}\right)\right| > \delta/\sqrt d
\right| \mathcal{B}_{i-1}\right] < c_1
\exp\left(-\frac{c\delta^{2/3}}{\sqrt[4]{a(n_i)}}\right).
\]
To derive this bound we'll need a truncated copy of $\mathcal
P_z\left(M_{n_i+m+1}^\tau\right)$. Define $N_{n_i+m+1}$ as follows
\[
N_{n_i+m+1}= \left\{
\begin{array}{cl}
\mathcal P_z\left(M_{n_i+m+1}^\tau\right) & \text{ if } |\mathcal
P_z\left(M_{n_i+m+1}^\tau\right)|\leq v, \\
\mbox{sgn}(\mathcal P_z\left(M_{n_i+m+1}^\tau\right))v & \text{
otherwise },
\end{array} \right.
\]
and define $\eta$ as
\[
\eta := \sum_{m=0}^{n_{(i+1)}-n_i-1} a(n_i+m)N_{n_i+m+1},
\]

Note that
\begin{eqnarray}
\label{error equation} \nonumber && \mathbb{P}\left[\left. \left|
\mathcal P_z \left(\zeta_{n_{(i+1)}}^{\tau}\right)\right| >
\delta/\sqrt d \right| \mathcal{B}_{i-1}\right]\\
\nonumber &\leq& \mathbb P\left[\left.|\eta|
> \delta/\sqrt d \right|
\mathcal{B}_{i-1}\right]\\
 && + \mathbb P\left[\exists m < n_{(i+1)} -n_i : \mathcal
P_z \left(M_{n_i+m+1}^\tau\right) \neq N_{n_i+m+1}|\mathcal
B_{i-1}\right].
\end{eqnarray}

To calculate bounds for the last two terms of (\ref{error equation})
we'll need a bound for the tail probability $\mathbb
P\left[|\mathcal P_z\left(M_{n_i+m+1}^\tau\right)|>u|\mathcal
B_{i-1}\right]$. Let $x_{n_i+m}^{\tau-1}(\cdot)$ denote the
following $F_{n_i+m}$-measurable function
\[
x_{n_i+m}^{\tau-1}(\cdot) = x_{(n_i+m)\wedge(\tau-1)}(\cdot).
\]
In order to get a good bound we first show that for all $i$, and
$m\leq n_{(i+1)}-n_i$, conditional on $\mathcal B_{i-1}$,
$\|x_{n_i+m}^{\tau - 1}\|$ is bounded by a constant.

We shall, therefore, successively get bounds for
\begin{enumerate}
\item
\[
\|x_{n_i+m}^{\tau - 1}(\cdot)\|.
\]
\item
\[
\mathbb P\left[|\mathcal
P_z\left(M_{n_i+m+1}^\tau\right)|>u|\mathcal B_{i-1}\right].
\]
\item
\[
\mathbb P\left[\left. |\eta| > \delta/\sqrt d \right|
\mathcal{B}_{i-1}\right].
\]
\item
\[
\mathbb{P}\left[\left. \left|\mathcal P_z
\left(\zeta_{n_{(i+1)}}^{\tau}\right) \right| > \delta/\sqrt d
\right| \mathcal{B}_{i-1}\right].
\]
\item
\[
\sum_i  \mathbb{P}\left[\max_{0 \leq j \leq n_{(i+1)}-n_i}
\left.\left\|\zeta_{n_i+j}\right\|
> \delta\right|\mathcal{B}_{i-1}\right].
\]
\end{enumerate}

\subsubsection{ Bound for $\|x_{n_i+m}^{\tau - 1}(\cdot)\|$}

Recall that there exists a suitable positive $\delta_B$ such that if
$x_{n_i}\in B$ and $\rho_i < \delta_B$ then $x_{n_{(i+1)}}\in B$,
too. It follows that conditional on $\mathcal B_{i-1}$ we must have
$x_{n_j}\in B$ for $j=0,1,\ldots,i$; in particular, $x_{n_i}\in B$.
Define $K_0 := \sup_{x \in B} \|x\|$. Thus, whatever be the $i$,
conditional on $\mathcal B_{i-1}$ we must have
\[
\|x_{n_i}\| \leq K_0.
\]

We next show that if $\|x_{n_i}\| \leq K_0$ then there exists an $N$
independent of $i$ such that $\|x_{n_i+ m}^{\tau-1}\|<N$ for all
$m\leq n_{(i+1)}-n_i$. As $m$ increases, if $\|x_{n_i+m}^{\tau-1}\|$
is unbounded, then it has to sequentially cross each one of the
values $K_0,K_0+1,\ldots, K_0+n, \ldots $. We will show that for a
fixed, finite $T$ this is not possible. Indeed, we'll show that
there exists a suitable $N$ such that $\|x_{n_i+m}^{\tau-1}\| < N$
for all $m\leq n_{(i+1)}-n_i$ where $N$ does not depend on $i$.  Our
proof will use the fact that the sum $\sum_{i=K_0}^{q} 1/i$ diverges
as $q\rightarrow\infty$.

For $0 \leq m_1 < m_2 \leq n_{(i+1)}-n_{i}-1$ we have
\begin{equation}
\label{pathwise} x_{n_i + m_2} = x_{n_i + m_1} + \sum_{j =
n_i+m_1}^{n_i+m_2-1} a(j)h(x_j) + \sum_{j = n_i+m_1}^{n_i+m_2-1}
a(j)M_{j+1}.
\end{equation}
Let $M_{n_i+m}^{\tau-1}(\cdot)$ denote
$M_{(n_i+m)\wedge(\tau(\omega)-1)}(\omega)$. Note that
$M_{n_i+m}^{\tau-1}(\cdot)$ is not a martingale difference. However,
it is a well defined $\mathcal F_{n_i+m}$-measurable random
variable. Writing (\ref{pathwise}) for the iterates prior to
stopping gives us
\begin{equation}
\label{sample path equation}
 x_{n_i + m_2}^{\tau-1} = x_{n_i +
m_1}^{\tau-1} + \sum_{j = n_i+m_1}^{n_i+m_2-1} a(j)h(x_j)\mathbb
I\{j+1<\tau\} + \sum_{j = n_i+m_1}^{n_i+m_2-1} a(j)M_{j+1}^{\tau-1}.
\end{equation}

For $k\geq 0$ define stopping times $\tau_k$ by
\[
\tau_k(\omega) := \inf\{n_i+m:\|x_{n_i+m}(\omega)\|\geq k\}\wedge
n_{(i+1)}.
\]
By (\ref{sample path equation}),
\[
 x_{\tau_k}^{\tau-1} = x_{\tau_{(k-1)}}^{\tau-1} + \sum_{j = \tau_{(k-1)}}^{\tau_k-1}
a(j)h(x_j)\mathbb I\{j+1<\tau\} + \sum_{j = \tau_{(k-1)}}^{\tau_k-1}
a(j)M_{j+1}^{\tau-1}.
\]

From the definition of $\tau$ it follows that
\[
\sup_{m_1,m_2}\left\| \sum_{m=m_1}^{m_2}a(n_i+m)
M_{n_i+m+1}^{\tau-1}\right\|< 2\delta,
\]
 whenever $m_1$ and $m_2$ are such that $n_i\leq n_i+m_1
\leq n_i+m_2 < n_{(i+1)}$. Further, since $h(\cdot)$ is  Lipschitz,
we have $\|h(x)\| \leq c(1 + \|x\|)$ for some $0<c<\infty$.
Combining the two gives
\begin{eqnarray}
\nonumber &&\|x_{\tau_k}^{\tau-1}\|\\
 \nonumber  &\leq&
 \|x_{\tau_{(k-1)}}^{\tau-1}\| +
\sum_{j=\tau_{(k-1)}}^{\tau_k-1}a(j)c(1+\|x_{j}^{\tau-1}\|) +
\left|\left|\sum_{j=\tau_{(k-1)}}^{\tau_k-1}a(j) M_{j+1}^{\tau-1}\right|\right| \\
\nonumber &\leq&  \|x_{\tau_{(k-1)}}^{\tau-1}\| + \sum_{j =
\tau_{(k-1)}}^{\tau_k-1}a(j)c(1+k) + 2\delta.
\end{eqnarray}

Assume, without loss of generality, that $\delta < 1/6$. If it
isn't, simply replace it by some constant which is less than $1/6$.
Recall that $a_{\max}<\infty$ where $a_{\max} = \sup_n a(n)$. Choose
an $N$ such that
\[
\sum_{k=K_0}^N \frac{1-5\delta}{c(1+k)}> T + a_{\max}.
\]
Let $n_0$ be large enough so that $a(n_0)c(1+N)<\delta$. Assume that
$\|x_{n_i+m}^{\tau-1}\|$ crosses the interval  $[k-1, k]$ from below
$k - 1$ to above $k$ as  $m$ ranges from $0$ to $n_{(i+1)}-n_{i}-1$.
As long as $k\leq N$ it will always be the case that
$\|x_{\tau_{(k-1)}}^{\tau-1}\|$ lies in the range
$[k-1,k-1+3\delta)$. We therefore get
\[
\sum_{j=\tau_{(k-1)}}^{\tau_k-1}a(j)\geq
\frac{\|x_{\tau_k}^{\tau-1}\| -
\|x_{\tau_{(k-1)}}^{\tau-1}\|-2\delta}{c(1+k)} \geq
\frac{1-5\delta}{c(1+k)} ,
\]
as long as $k\leq N$ and $\|x_{n_i+m}^{\tau-1}\|$ crosses the
interval $[k-1, k]$ from below $k - 1$ to above $k$. Since
$\sum_k\sum_{j=\tau_{(k-1)} }^{\tau_k-1}a(j)$ can never exceed
$T+a_{\max}$, and since $\sum_{k=K_0}^N \frac{1-5\delta}{c(1+k)}> T
+ a_{\max}$,  it follows that $N$ is an upper bound for
$\|x_{n_i+m}^{\tau-1}(\cdot)\|$. To summarize:
\begin{lemma}
\label{factoid}
 There exists a constant $N$ such that for all $i$, conditional on $\mathcal B_{i-1}$,
and all $m\leq n_{(i+1)}-n_i$,  the following holds
\[
\|x_{n_i+m}^{\tau - 1}\| < N.
\]
\end{lemma}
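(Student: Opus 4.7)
My plan is to prove the uniform bound $\|x_{n_i+m}^{\tau-1}\| < N$ by arguing that, conditional on $\mathcal{B}_{i-1}$, the stopped iterate cannot escape a sufficiently large ball during the time window $[t(n_i), t(n_{i+1}))$ because doing so would require more accumulated step-size mass than is available on that window. The key is to turn the crude $\|h(x)\| \leq c(1+\|x\|)$ bound into a quantitative lower bound on the cost (in accumulated $a(j)$'s) of crossing each successive unit sphere, and then to invoke divergence of the harmonic series to rule out unbounded growth.

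First I would use the conditioning on $\mathcal{B}_{i-1}$, which forces $x_{n_i} \in B$ and hence $\|x_{n_i}\| \leq K_0 := \sup_{x \in B}\|x\|$. Then I would exploit the definition of $\tau$: up to time $\tau$ the coordinate-wise supremum of $\zeta$ stays below $\delta/\sqrt d$, so in particular any partial sum satisfies $\|\sum_{m=m_1}^{m_2} a(n_i+m) M_{n_i+m+1}^{\tau-1}\| < 2\delta$. Plugging this uniform martingale bound, together with the Lipschitz estimate $\|h(x)\|\leq c(1+\|x\|)$, into the sample-path recursion (\ref{sample path equation}) will give a clean deterministic recursion for $\|x_{n_i+m}^{\tau-1}\|$.

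Next I would introduce the level-crossing stopping times $\tau_k := \inf\{n_i+m : \|x_{n_i+m}\|\geq k\}\wedge n_{(i+1)}$ and require $n_0$ large enough that $a(n_0)c(1+N) < \delta$ for the target $N$ to be chosen below; this guarantees that a single step cannot overshoot by more than $\delta$, so that $\|x_{\tau_{k-1}}^{\tau-1}\|$ lies in $[k-1, k-1+3\delta)$. With that, crossing the annulus $\{k-1 \leq \|x\| \leq k\}$ requires
\[
\sum_{j=\tau_{k-1}}^{\tau_k - 1} a(j) \;\geq\; \frac{1-5\delta}{c(1+k)}.
\]

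The main obstacle is to make the harmonic comparison yield an $N$ that is \emph{uniform in $i$}. Summing the crossing lower bound over $k = K_0, K_0+1, \ldots, N$ and comparing with the budget $\sum_{j = n_i}^{n_{i+1}-1} a(j) \leq T + a_{\max}$ shows that $\|x_{n_i+m}^{\tau-1}\|$ cannot cross the level $N$ provided
\[
\sum_{k=K_0}^{N} \frac{1-5\delta}{c(1+k)} \;>\; T + a_{\max}.
\]
Since the harmonic series diverges, such an $N$ exists, and it depends only on $T$, $K_0$, $c$, $\delta$ (and $a_{\max}$), not on $i$. A contradiction argument — if the iterate ever reached $N$, its crossings would exhaust more step-size mass than the window contains — completes the proof. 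I would close by replacing $\delta$ with a smaller constant (e.g.\ $\delta < 1/6$) at the outset if needed, which is harmless because we are only looking for \emph{some} $N$.
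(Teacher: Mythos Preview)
Your proposal is correct and follows essentially the same approach as the paper's own proof: the same level-crossing stopping times $\tau_k$, the same $2\delta$ bound on the stopped martingale increments, the same single-step overshoot control via $a(n_0)c(1+N)<\delta$, and the same harmonic-series lower bound $\sum_{k=K_0}^{N}\frac{1-5\delta}{c(1+k)}>T+a_{\max}$ to force a contradiction with the step-size budget. The only cosmetic difference is that the paper fixes $\delta<1/6$ at the outset rather than at the end.
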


\subsubsection{Bound for $\mathbb P\left[|\mathcal
P_z\left(M_{n_i+m+1}^\tau\right)|>u|\mathcal B_{i-1}\right]$}

\begin{lemma}
\label{estimate} There exist constants $K_1$ and $K_2$ such that,
for $u$ sufficiently large, the following holds
\[
\mathbb P\left[|\mathcal P_z(M_{n_i+m+1}^\tau)|>u|\mathcal
B_{i-1}\right] \leq K_1\exp{(-K_2u)}.
\]
\end{lemma}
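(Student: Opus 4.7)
The strategy is to combine the exponential tail hypothesis~(\ref{tail}) on the scaled martingale differences with the deterministic upper bound on $\|x_{n_i+m}^{\tau-1}\|$ established in Lemma~\ref{factoid}. The whole point of having introduced the stopping time $\tau$ is precisely to trap the iterates inside a bounded region determined by $B$, so that the troublesome factor $1+\|x_i\|$ on the right of~(\ref{tail}) can be replaced by the deterministic constant $1+N$ and absorbed into the final constants $K_1,K_2$.

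More concretely, I would first interpret $M_{n_i+m+1}^\tau$ as the increment of the stopped martingale $\zeta^\tau$, namely $M_{n_i+m+1}\,\mathbb{I}\{n_i+m<\tau\}$, so that in particular
\[
\bigl|\mathcal P_z\!\left(M_{n_i+m+1}^\tau\right)\bigr|\;\le\;\|M_{n_i+m+1}\|\,\mathbb{I}\{n_i+m<\tau\}.
\]
Both $\mathcal B_{i-1}$ and $\{n_i+m<\tau\}$ are $\mathcal F_{n_i+m}$-measurable: the former because it is determined by the trajectory up to time $n_i\le n_i+m$, the latter because $\tau$ is a stopping time. On the intersection $\mathcal B_{i-1}\cap\{n_i+m<\tau\}$ one has $x_{n_i+m}=x_{n_i+m}^{\tau-1}$, and Lemma~\ref{factoid} then yields $\|x_{n_i+m}\|<N$.

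I would then apply the hypothesis~(\ref{tail}) conditionally on $\mathcal F_{n_i+m}$ with $v:=u/(1+\|x_{n_i+m}\|)$, which is legitimate as soon as $u/(1+N)$ exceeds the threshold beyond which~(\ref{tail}) holds --- this is what ``$u$ sufficiently large'' amounts to. This gives
\[
\mathbb P\!\left[\|M_{n_i+m+1}\|>u\,\big|\,\mathcal F_{n_i+m}\right]\;\le\;C_1\exp\!\left(-\frac{C_2\,u}{1+\|x_{n_i+m}\|}\right).
\]
Multiplying by $\mathbb{I}\{n_i+m<\tau\}$, replacing $\|x_{n_i+m}\|$ by the uniform bound $N$ on the resulting indicator (restricted to $\mathcal B_{i-1}$), and using the tower property to take conditional expectations given $\mathcal B_{i-1}$ yields the claim with $K_1:=C_1$ and $K_2:=C_2/(1+N)$. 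The only real obstacle is bookkeeping: correctly interpreting the stopped noise as a martingale increment and verifying the adaptedness of $\mathcal B_{i-1}$ and $\{n_i+m<\tau\}$. All of the substantive pathwise work has already been done in Lemma~\ref{factoid}, and this lemma is essentially its translation into a tail estimate.
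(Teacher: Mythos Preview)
Your proposal is correct and follows essentially the same approach as the paper: combine the tail hypothesis~(\ref{tail}) with the deterministic bound $\|x_{n_i+m}^{\tau-1}\|<N$ from Lemma~\ref{factoid} to obtain $K_1=C_1$ and $K_2=C_2/(1+N)$. If anything, you are more careful than the paper about interpreting the stopped increment as $M_{n_i+m+1}\mathbb{I}\{n_i+m<\tau\}$ and about the measurability of $\mathcal B_{i-1}$ and $\{n_i+m<\tau\}$ needed to invoke the tower property; the paper simply writes the chain of inequalities without spelling out these justifications.
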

\begin{proof}
Using first the tail probability bound (\ref{tail}), and then Lemma
\ref{factoid}, we get, for $u$ sufficiently large, the following
bound
\begin{eqnarray}
\nonumber \mathbb P\left[|\mathcal P_z(M_{n_i+m+1}^\tau)|>u|\mathcal
B_{i-1}\right] &\leq& \mathbb
P\left[\left\|M_{n_i+m+1}^\tau\right\|>u|\mathcal B_{i-1}\right] \\
\nonumber &\leq& C_1 \exp\left(-C_2 u/(1+\|x_{n_i+m}^{\tau - 1}\|)\right)\\
\nonumber &\leq& C_1\exp\left(-C_2 u/(1+N)\right).
\end{eqnarray}
\end{proof}

\subsubsection{Bound for $\mathbb P\left[\left. |\eta| > \delta/\sqrt d \right|
\mathcal{B}_{i-1}\right]$}

For $0 \leq m < n_{(i+1)}-n_i$, define $Y_{n_i+m+1}$ as
\[
Y_{n_i+m+1} := N_{n_i+m+1} - \mathbb E [N_{n_i+m+1}|\mathcal
F_{n_i+m}].
\]
Note that $Y_{n_i+1},Y_{n_i+2},\ldots,Y_{n_{(i+1)}}$ is a martingale
difference sequence and, consequently, $\sum_{m=0}^{n_{(i+1)}-n_i-1}
a(n_i+m)Y_{n_i+m+1}$ is a martingale. We can write $\eta$ as
\begin{equation}
\label{decomposition} \eta = \sum_{m=0}^{n_{(i+1)}-n_i-1}a(n_i+m)
Y_{n_i+m+1}+\sum_{m=0}^{n_{(i+1)}-n_i-1}a(n_i+m)\mathbb E
[N_{n_i+m+1}|\mathcal F_{n_i+m}].
\end{equation}

Note that $\mathcal P_z\left(M^\tau_{n_i+m}\right)$ is a martingale
difference for $0 \leq m \leq n_{(i+1)}-n_i$. Using Lemma
\ref{estimate}, this gives us, for $v$ sufficiently large,  the
following bound
\begin{eqnarray} \nonumber E [N_{{n_i}+m+1}|\mathcal F_{n_i+m}] &=& \mathbb E
[N_{{n_i}+m+1}-\mathcal
P_z\left(M^\tau_{n_i+m+1}\right)|\mathcal F_{n_i+m}]\\
\nonumber &\leq& \int_v^\infty
K_1\exp{(-K_2u)}\mathrm du\\
\nonumber &=& c_1\exp{(-cv)}.
\end{eqnarray}
A similar calculation  shows
\[
\mathbb E [N_{{n_i}+m+1}|\mathcal F_{n_i+m}] \geq - c_1\exp{(-cv)},
\]
and consequently, for $0 \leq m \leq n_{(i+1)}-n_i$,
\[
\left|\mathbb E [N_{{n_i}+m+1}|\mathcal F_{n_i+m}]\right| \leq
c_1\exp{(-cv)}.
\]
Combining everything gives
\[
\left|\sum_{m=0}^{n_{(i+1)}-n_i-1}a(n_i+m)\mathbb E
[N_{n_i+m+1}|\mathcal F_{n_i+m}]\right| \leq (T+1)c_1\exp{(-cv)}.
\]

Note that the last expression can be made as small as desired by
choosing $v$ sufficiently large. Choose $v =
\sqrt[3]{\delta^2/a(n_i)}$. The reason for this specific choice will
become clear later. It follows that for $n_0$ large enough, $v$ will
indeed be  as large as required. Assume  $n_0$ to be sufficiently
large that
\begin{equation}
\label{mean} \left|\sum_{m=0}^{n_{(i+1)}-n_i-1}a(n_i+m)\mathbb E
[N_{n_i+m+1}|\mathcal F_{n_i+m}]\right|< \frac{\delta}{2\sqrt d}.
\end{equation}

Using  (\ref{decomposition}) and (\ref{mean}), we get, for $n_0$
sufficiently large, the following
\[
\mathbb P\left[ |\eta|
> \delta/\sqrt d |\mathcal{B}_{i-1}\right]  \leq \mathbb P\left[ \left.\left|\sum_{m=0}^{n_{(i+1)}-n_i-1}a(n_i+m)
Y_{n_i+m+1}\right| > \frac{\delta}{2\sqrt d} \right|
\mathcal{B}_{i-1}\right]
\]

 We recall the Azuma-Hoeffding inequality for martingales that have bounded
differences. Suppose $\{X_k : k = 0, 1, 2,\ldots\}$ is a martingale
and the differences satisy $| X_k - X_{k - 1} | < c_k$ a.s. Then for
all positive integers $n$ and all positive reals $t$, $ \mathbb
P(X_n - X_0 \geq t) \leq \exp\left ({-t^2 \over 2
\sum_{k=1}^{n}c_k^2} \right)$. We'll use it in the two sided form
\[
\mathbb P[|X_n - X_0| \geq t] \leq 2\exp\left
(\frac{-t^2}{2\sum_{k=1}^{n}c_k^2}\right).
\]

Note that $|Y_{n_i+m+1}|\leq 2v$ and $a(n_i+m)\leq\gamma_T a(n_i)$.
Also, $n_{(i+1)}-n_i\leq\gamma_TT/a(n_i)$. This gives, for $n_0$
large enough to satisfy (\ref{mean}), the following bound
\begin{equation}
\mathbb P\left[ \left.\left|\sum_{m=0}^{n_{(i+1)}-n_i-1}a(n_i+m)
Y_{n_i+m+1}\right| > \frac{\delta}{2\sqrt d} \right|
\mathcal{B}_{i-1}\right]\leq 2\exp{\left(-\frac{c\delta^2}{a(n_i)
v^2}\right)}.
\end{equation}

To summarize:
\begin{lemma}
\label{error1}
 For $n_0$ sufficiently large, there exists a constant
$c$ such that
\[
 \mathbb P\left[ |\eta|
> \delta/\sqrt d |\mathcal{B}_{i-1}\right] \leq 2\exp{\left(-\frac{c\delta^2}{a(n_i)
v^2}\right)},
\]
where $v = v(a(n_i))$ is such that $v(a(n_i))\uparrow\infty$ as
$a(n_i)\downarrow 0$.
\end{lemma}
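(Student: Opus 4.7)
My plan is to assemble the bound from the decomposition and estimates already developed in this subsection. The key identity is $\eta = M_i + R_i$, where
$M_i := \sum_{m=0}^{n_{(i+1)}-n_i-1} a(n_i+m)\, Y_{n_i+m+1}$
is a genuine martingale (its differences $Y_{n_i+m+1}$ are centred by construction), while
$R_i := \sum_{m=0}^{n_{(i+1)}-n_i-1} a(n_i+m)\, \mathbb{E}[N_{n_i+m+1}\mid\mathcal{F}_{n_i+m}]$
collects the mean-correction introduced by truncating at level $v$. I would handle these two pieces separately and then combine them via a triangle inequality.

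For $R_i$, the key observation is that $\mathcal{P}_z(M_{n_i+m+1}^{\tau})$ is itself a martingale difference with zero conditional mean, so $\mathbb{E}[N_{n_i+m+1}\mid\mathcal{F}_{n_i+m}] = \mathbb{E}[N_{n_i+m+1} - \mathcal{P}_z(M_{n_i+m+1}^{\tau})\mid\mathcal{F}_{n_i+m}]$ and the integrand is supported on the tail event $\{|\mathcal{P}_z(M_{n_i+m+1}^{\tau})|>v\}$. Integrating the exponential tail of Lemma \ref{estimate} yields a bound of $c_1 e^{-cv}$ per term, and summing with $\sum a(n_i+m) \leq T + a_{\max}$ gives $|R_i|\leq (T+1)c_1 e^{-cv}$. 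I would then choose $v = v(a(n_i))$ with $v\uparrow\infty$ as $a(n_i)\downarrow 0$ (the specific choice $v = \sqrt[3]{\delta^2/a(n_i)}$ anticipates the next lemma) and take $n_0$ large enough to force $|R_i|<\delta/(2\sqrt d)$; uniformity in $i$ comes from monotonicity of $a(n_i)$, so that a single $n_0$ suffices across all $i\geq 0$.

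Once $R_i$ is eliminated, the event $\{|\eta|>\delta/\sqrt d\}$ is contained in $\{|M_i|>\delta/(2\sqrt d)\}$, and I close via the two-sided Azuma--Hoeffding inequality. The differences satisfy $|a(n_i+m)\, Y_{n_i+m+1}|\leq 2v\gamma_T a(n_i)$ (since $|N_{n_i+m+1}|\leq v$ gives $|Y_{n_i+m+1}|\leq 2v$), and there are at most $\gamma_T T/a(n_i)$ summands, so $\sum_k c_k^2 = O(v^2 a(n_i))$; Azuma--Hoeffding then produces exactly the claimed bound $2\exp(-c\delta^2/(a(n_i)v^2))$. The only genuine tension in the proof is that $v$ must be large enough to kill $R_i$ yet not so large as to ruin the Azuma exponent; this is reconciled because $R_i$ decays exponentially in $v$ while the Azuma exponent degrades only polynomially in $v$, leaving a wide window in which the balancing choice $v = \sqrt[3]{\delta^2/a(n_i)}$ (optimised in the next lemma) comfortably lies.
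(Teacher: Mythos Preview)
Your proposal is correct and follows essentially the same route as the paper: the same decomposition $\eta = \sum a(n_i+m)Y_{n_i+m+1} + \sum a(n_i+m)\mathbb{E}[N_{n_i+m+1}\mid\mathcal F_{n_i+m}]$, the same tail-integral bound $c_1 e^{-cv}$ on the mean correction, the same choice $v=\sqrt[3]{\delta^2/a(n_i)}$, and the same Azuma--Hoeffding estimate using $|Y_{n_i+m+1}|\le 2v$, $a(n_i+m)\le\gamma_T a(n_i)$, and $n_{(i+1)}-n_i\le \gamma_T T/a(n_i)$. One minor remark: your appeal to ``monotonicity of $a(n_i)$'' for uniformity in $i$ is not quite the paper's assumption; rather, (A2)(iii) gives $a(n_i)\le c\,a(n_0)$, which combined with $a(n_0)\downarrow 0$ is what makes a single choice of $n_0$ work for all $i\ge 0$.
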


\subsubsection{Bound for $\mathbb{P}\left[\left. \left| \mathcal P_z \left(\zeta_{n_{(i+1)}}^{\tau}\right)\right| >
\delta/\sqrt d \right| \mathcal{B}_{i-1}\right]$}

We have
\begin{eqnarray}
\nonumber &&\mathbb P\left[\exists m < n_{(i+1)} -n_i : \mathcal P_z
\left(M_{n_i+m+1}^\tau\right) \neq N_{n_i+m+1}|\mathcal
B_{i-1}\right]\\
\nonumber &\leq& \frac{\gamma_T T}{a(n_i)}\times K_1\exp{(-K_2v)}\\
\label{error2} &=& \frac{c_1}{a(n_i)}\exp{(-c_2v)}.
\end{eqnarray}
Plugging  (\ref{error2}) in (\ref{error equation}), and applying
Lemma \ref{error1} we get
\begin{eqnarray}
\nonumber && \mathbb{P}\left[\left. \left| \mathcal P_z
\left(\zeta_{n_{(i+1)}}^{\tau}\right)\right| >
\delta/\sqrt d \right| \mathcal{B}_{i-1}\right] \\
\nonumber &\leq& 2\exp{\left(-c\delta^2/a(n_i)
v^2\right)}+\frac{c_1}{a(n_i)}\exp{(-c_2v)}.
\end{eqnarray}

Since the left hand side is independent of $v$ we can choose a
 value for $v$ which keeps the right hand side sufficiently
low. Specifically, we choose
\[
v = \sqrt[3]{\delta^2/a(n_i)}.
\]
This gives us the
following bound
\begin{eqnarray}
\nonumber && \mathbb{P}\left[\left. \left| \mathcal P_z
\left(\zeta_{n_{(i+1)}}^{\tau}\right)\right| >
\delta/\sqrt d \right| \mathcal{B}_{i-1}\right]\\
\nonumber &<& \frac{c_1}{a(n_i)}
\exp\left(-\frac{c\delta^{2/3}}{\sqrt[3]{a(n_i)}}\right) \\
\nonumber &<& c_1
\exp\left(-\frac{c\delta^{2/3}}{\sqrt[4]{a(n_i)}}\right),
\end{eqnarray}
for $n_0$ large enough.

\subsubsection{Bound for $\sum_i  \mathbb{P}\left[\max_{0 \leq j \leq n_{(i+1)}-n_i}
\left.\left\|\zeta_{n_i+j}\right\|
> \delta\right|\mathcal{B}_{i-1}\right]$}

Plugging the last bound  in~(\ref{eqnarray}) we get
\[
\mathbb{P}\left[\left.\max_{0 \leq j \leq n_{(i+1)}-n_i}
\left\|\zeta_{n_i+j}\right\|
> \delta\right|\mathcal{B}_{i-1}\right] \leq c_1
\exp\left(-\frac{c\delta^{2/3}}{\sqrt[4]{a(n_i)}}\right),
\]
where we have absorbed the multiplicative factor of $d$ in the
constant $c_1$.

We note that $c_1 \exp\left(-\frac{
c\delta^{2/3}}{\sqrt[4]{y}}\right)$ is a convex function for $y\in
(0,c_2)$, where $c_2$ is a sufficiently small positive constant.
Furthermore,
\[
c_1 \exp\left(-\frac{ c\delta^{2/3}}{\sqrt[4]{y}}\right)\rightarrow
0 \text{ as } y\downarrow 0.
\]
For such functions we have the following fact:
\begin{lemma}
\label{convex} Let $g(\cdot)$ be a function such that $g(0)=0$ and
$g(\cdot)$ is convex in the region $(0,c)$ for some $c>0$. For $a,b
\geq 0$ and $a+b<c$, the following holds
\[
g(a) + g(b) \leq g(a+b).
\]
\end{lemma}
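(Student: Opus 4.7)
The plan is to exploit the standard fact that convexity together with $g(0)=0$ forces $g$ to be \emph{star-shaped} at the origin, meaning $g(\lambda x)\leq \lambda g(x)$ for $\lambda\in[0,1]$ and $x\in(0,c)$. This is the key enabling observation, and the rest is a one-line rescaling.

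First I would handle the trivial cases $a=0$ or $b=0$ (where the inequality reduces to $g(a)\leq g(a)$ using $g(0)=0$) so that I may assume $a,b>0$ and $a+b<c$. Next I would verify the star-shapedness: write $a = \lambda(a+b)+(1-\lambda)\cdot 0$ with $\lambda = a/(a+b)\in(0,1)$, and apply the definition of convexity on the segment from $0$ to $a+b$ (which lies entirely in $[0,c)$) to obtain
\[
g(a) \;\leq\; \lambda g(a+b) + (1-\lambda)g(0) \;=\; \tfrac{a}{a+b}\,g(a+b).
\]
By the same argument with $\lambda' = b/(a+b)$,
\[
g(b) \;\leq\; \tfrac{b}{a+b}\,g(a+b).
\]
Adding the two bounds gives $g(a)+g(b)\leq g(a+b)$, which is the claim.

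The main (and only) subtlety is making sure the segment on which convexity is invoked lies in the open region $(0,c)$ where $g$ is assumed convex; since $a+b<c$ and we include $0$ as the endpoint where the value $g(0)=0$ is given by hypothesis, the interpolation inequality is legitimate. No further machinery is needed, so I do not expect a real obstacle beyond this bookkeeping.
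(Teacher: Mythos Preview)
Your argument is correct and is essentially identical to the paper's own proof: the paper also writes $a$ as a convex combination of $0$ and $a+b$, applies convexity to get $g(a)\leq \frac{a}{a+b}g(a+b)$, does the same for $b$, and adds. Your additional remarks on the trivial cases and on the domain bookkeeping are fine but not needed beyond what the paper does.
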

\begin{proof}
We have
 \begin{eqnarray}
\nonumber g(a) &=& g\left(\frac b {a+b}0 + \frac a {a+b}(a+b)\right) \\
\nonumber &\leq& \frac b {a+b}g(0) + \frac a {a+b}g(a+b) \\
\nonumber &=& \frac a {a+b}g(a+b).
 \end{eqnarray}
Similarly
\[
g(b) \leq \frac b {a+b}g(a+b).
\]
Adding the two we get
\[
g(a)+g(b) \leq g(a+b).
\]
\end{proof}

Finally, by Lemma~\ref{convex} and Remark~\ref{remark}, we get
\begin{eqnarray}
\nonumber \sum_i  \mathbb{P}\left[\max_{0 \leq j \leq n_{(i+1)}-n_i}
\left.\left\|\zeta_{n_i+j}\right\|
> \delta\right|\mathcal{B}_{i-1}\right] &<& \sum_i c_1
\exp\left(-\frac{c\delta^{2/3}}{\sqrt[4]{a(n_i)}}\right)\\
\nonumber &\leq& c_1
\exp\left(-\frac{c\delta^{2/3}}{\sqrt[4]{\sum_ia(n_i)}}\right)\\
\nonumber &\leq& c_1
\exp\left(-\frac{c\delta^{2/3}}{\sqrt[4]{b(n_0)}}\right),
\end{eqnarray}
provided $n_0$ is sufficiently large. Note, in particular, that
$n_0$ should be large enough to ensure that $\sum_ia(n_i)$ lies in
the region of convexity of
$c_1\exp\left(-\frac{c\delta^{2/3}}{\sqrt[4]{y}}\right)$.

To summarize the calculations of this section, we have proved the
following result:

\begin{theorem}
Under assumptions (A1)-(A3),  the assumption that for large $u$,
 the tail probability bound (\ref{tail}) holds, and the step size assumption
(\ref{stepassume}), we have the following bound provided $n_0$ is
sufficiently large
\[
\mathbb P[x_n\rightarrow H|x_{n_0}\in B] \geq 1 - c_1
\exp\left(-\frac{c\delta^{2/3}}{\sqrt[4]{b(n_0)}}\right),
\]
where $\delta=\delta_B/2K_T$.
\end{theorem}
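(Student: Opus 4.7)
The plan is to assemble the estimates developed throughout Section 5 and feed them into the decomposition of the error probability recalled at the beginning of the section. Specifically, one starts from
\[
\mathbb{P}[x_n \to H \mid x_{n_0} \in B] \geq 1 - \sum_{i=0}^{\infty} \mathbb{P}\!\left[\max_{0 \leq j \leq n_{(i+1)}-n_i} \|\zeta_{n_i+j}\| > \delta \,\middle|\, \mathcal{B}_{i-1}\right],
\]
and bounds each term on the right. For a fixed $i$, the strategy is: (i) reduce from the Euclidean norm to the $\ell^\infty$-norm at the cost of a factor $\sqrt d$, (ii) stop the vector martingale at the first time the $\ell^\infty$-norm exceeds $\delta/\sqrt d$ so that the iterates remain bounded on the event we still care about, (iii) project onto each coordinate axis and union-bound, (iv) truncate the resulting scalar martingale differences at a level $v$ to be optimised, and (v) apply Azuma-Hoeffding to the truncated, centred part.

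The ingredient that makes the exponential tail hypothesis~(\ref{tail}) effective is Lemma~\ref{factoid}: on $\mathcal{B}_{i-1}$, the pre-stopping iterates $x^{\tau-1}_{n_i+m}$ are uniformly bounded by a constant $N$ independent of $i$. Dividing by $(1+N)$ then converts the conditional tail hypothesis into a uniform exponential tail for $|\mathcal{P}_z(M^{\tau}_{n_i+m+1})|$ (Lemma~\ref{estimate}). With this in hand, Lemma~\ref{error1} controls the truncated, centred martingale $\eta$ via Azuma-Hoeffding, using $|Y_{n_i+m+1}| \leq 2v$, the window length $n_{(i+1)}-n_i = O(1/a(n_i))$ coming from~(\ref{stepassume}), and $a(n_i+m) = \Theta(a(n_i))$.

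Combining these yields a two-term bound for each $i$: an Azuma contribution of order $\exp(-c\delta^2/(a(n_i)\,v^2))$ plus a truncation contribution of order $\exp(-c_2 v)/a(n_i)$. Choosing $v = (\delta^2/a(n_i))^{1/3}$ equalises the exponents and gives cube-root decay in $a(n_i)$; absorbing the polynomial $1/a(n_i)$ prefactor then weakens the exponent to fourth-root decay, leaving the per-index bound $c_1 \exp(-c\delta^{2/3}/\sqrt[4]{a(n_i)})$. Finally, since the function $y \mapsto c_1 \exp(-c\delta^{2/3}/\sqrt[4]{y})$ vanishes at $0$ and is convex near $0$, Lemma~\ref{convex} provides subadditivity,
\[
\sum_i c_1 \exp\!\left(-\frac{c\delta^{2/3}}{\sqrt[4]{a(n_i)}}\right) \leq c_1 \exp\!\left(-\frac{c\delta^{2/3}}{\sqrt[4]{\sum_i a(n_i)}}\right) \leq c_1 \exp\!\left(-\frac{c\delta^{2/3}}{\sqrt[4]{b(n_0)}}\right),
\]
the last step using Remark~\ref{remark}, which identifies $\sum_i a(n_i)$ with $\Theta(b(n_0))$.

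The principal obstacle is the interplay between the balancing step and the summation over $i$. Azuma-Hoeffding demands bounded martingale differences, which forces the truncation at level $v$, but the truncation error decays only like $e^{-c_2 v}$ and is inflated by a $1/a(n_i)$ factor from the window length, so the natural balance gives only cube-root decay per index, which is \emph{not} summable in $i$. It is exactly the step of weakening cube-root to fourth-root decay that absorbs the prefactor and, via the convexity lemma, permits summation of the per-index bounds into a single expression controlled by $b(n_0)$. One must also ensure that $\sum_i a(n_i)$ lies in the convexity region of the per-index bound, which is the source of the "for $n_0$ sufficiently large" proviso in the theorem.
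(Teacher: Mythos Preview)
Your proposal is correct and follows the paper's own proof essentially step for step: the same stopping/projection/truncation decomposition, the same use of Lemmas~\ref{factoid}, \ref{estimate}, \ref{error1}, the same balancing choice $v=(\delta^2/a(n_i))^{1/3}$, the same weakening from cube-root to fourth-root decay to absorb the $1/a(n_i)$ prefactor, and the same subadditivity-via-convexity summation through Lemma~\ref{convex} and Remark~\ref{remark}. One small remark: the reason for passing from $\sqrt[3]{a(n_i)}$ to $\sqrt[4]{a(n_i)}$ is not that the cube-root bound fails to be summable in $i$, but rather that the convexity trick requires a clean function $g(y)$ with $g(0)=0$, and the $1/a(n_i)$ prefactor must first be absorbed into the exponential to obtain such a $g$; your subsequent sentence captures this correctly.
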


\section{Application: a sample complexity result}

As an application of our result we give here a sample complexity
estimate, which roughly says that conditional on $x_{n_0}\in B$ for
some fixed, sufficiently large $n_0$, with a high probability the
interpolated trajectory $\bar x(t)$ will be sufficiently close to
$H$ after any lapse of time greater than some fixed $\gamma$. We now
state the result more formally. We briefly sketch how the sample
complexity result follows from an error probability bound. For a
fuller description see \cite{borkar}, p 42.

Fix an $\epsilon>0$ such that $H^{\epsilon} := \{x: V(x) < \epsilon\} \subset \bar H^{\epsilon} := \{x: V(x) \leq \epsilon\} \subset B$. Since
$\bar B\setminus H^\epsilon$ is compact, $V(\cdot)$ is continuous,
and the o.d.e. $\dot x(t)=h(x(t))$ is well-posed, it follows that
there is a strictly positive $\Delta$ such that if the o.d.e. starts
from $x\in \bar B\setminus H^\epsilon $, flows for any time greater
than $T$ and reaches $y$, then $V(y)<V(x)-\Delta$.

Let $N_\delta(\cdot)$ denote a $\delta$-neighborhood of its
argument. Fix  $\delta$ such that $N_\delta(H^\epsilon)\subset B$,
and for all $x,y \in \bar B$ with $\|x-y\| < \delta$, we have
$\|V(x)-V(y)\| < \Delta/2$. We can do so since $V(\cdot)$ is
continuous and $\bar B$ compact.

We assume that $x_{n_0}\in B$. Further, assuming that $\rho_i <
\delta$ for all $i$, we derive an estimate $\gamma$ for the time in
which iterates, if they start with $x_{n_0}\in B\setminus
H^\epsilon$, will get trapped in $N_\delta(H^{\epsilon + \Delta/2})$
except for a small error probability given by $\sum_i\mathbb P
\left[\rho_i \geq \delta |\mathcal{B}_{i-1} \right] $.

The iterates, while they are in $B\setminus H^\epsilon$, would lose
a minimum of $\Delta$ from their potential if they could exactly
follow the o.d.e. for time $T$. As $\rho_i < \delta \ \forall i$,
over time $T$, they deviate up to $\delta$ from the o.d.e. However
since a $\delta$ shift can change the potential only by $\Delta/2$,
they are still guaranteed a loss of potential of $\Delta/2$. They
can continue losing $\Delta/2$ over every lapse of time $T$ until
$x_{n_i}\in H^\epsilon$ for some $i$.
Thereafter  the `boundary iterates' $x_{n_j}, j \geq i,$ remain
trapped in $H^{\epsilon + \Delta/2}$, since, if $x_{n_j} \in
H^{\epsilon}$ then even with the worst possible `throwing out'
$x_{n_{(j+1)}} \in H^{\epsilon+\Delta/2}$. It follows that for $j
\geq i$  the intermediate iterates $x_{n_j+m},  m < n_{(j+1)} - n_j
$, remain trapped in $N_\delta(H^{\epsilon + \Delta/2})$. Thus we
get the following estimate for $\gamma$:
\[
\gamma = \frac{\max_{x\in \bar
B}V(x)-\epsilon}{\Delta/2}\times(T+1),
\]
leading to the following sample complexity estimate
\begin{theorem}
Under assumptions (A1)-(A3), the step size assumption
(\ref{stepassume}), and the assumption that for large $u$,
 the tail probability bound (\ref{tail}) holds, we have the following bound provided $n_0$ is
sufficiently large
\[
\mathbb P\left[\bar x(t)\in N_\delta(H^{\epsilon + \Delta/2}) \
\forall t \geq t_0 + \gamma|x_{n_0}\in B\right] \geq 1 - c_1
\exp\left(-\frac{c\delta^{2/3}}{\sqrt[4]{b(n_0)}}\right),
\]
where $\delta=\delta_B/2K_T$.
\end{theorem}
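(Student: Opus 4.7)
The plan is to couple the deterministic Lyapunov descent argument sketched in the paragraphs preceding the theorem with the lock-in error bound proved in the previous section. Let $\mathcal{G}$ denote the good event $\{x_{n_0}\in B\}\cap\{\rho_i<\delta\text{ for all }i\geq 0\}$, where $\delta$ is chosen as in the preamble so that $N_\delta(H^\epsilon)\subset B$ and, for all $x,y\in\bar B$ with $\|x-y\|<\delta$, $|V(x)-V(y)|<\Delta/2$. It suffices to (a) show deterministically that on $\mathcal{G}$ the interpolated trajectory $\bar x(t)$ enters $N_\delta(H^{\epsilon+\Delta/2})$ by time $t_0+\gamma$ and remains there, and (b) bound $\mathbb P[\mathcal{G}^c\mid x_{n_0}\in B]$ by the exponential quantity in the theorem statement.

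For the deterministic part, I would track the potential $V(x_{n_i})$ at the window endpoints. As long as $x_{n_i}\in\bar B\setminus H^\epsilon$, the o.d.e.\ flow $x^{t(n_i)}(\cdot)$ started at $x_{n_i}$ satisfies $V(x^{t(n_i)}(t(n_{i+1})))\leq V(x_{n_i})-\Delta$ by the choice of $\Delta$, while on $\mathcal{G}$ the definition of $\rho_i$ gives $\|x_{n_{i+1}}-x^{t(n_i)}(t(n_{i+1}))\|<\delta$, so the choice of $\delta$ yields $V(x_{n_{i+1}})\leq V(x_{n_i})-\Delta/2$. Hence $V$ at boundary iterates drops by at least $\Delta/2$ per window until the first $i^\star$ with $x_{n_{i^\star}}\in H^\epsilon$. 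The number of such windows is bounded by $(\max_{x\in\bar B}V(x)-\epsilon)/(\Delta/2)$, and each window has duration at most $T+a_{\max}\leq T+1$ (after possibly adjusting constants), which together yield $\gamma$ as defined. Once $x_{n_i}\in H^\epsilon$, the one-step estimate above shows $V(x_{n_{i+1}})\leq\epsilon+\Delta/2$, so boundary iterates stay trapped in $H^{\epsilon+\Delta/2}$; since intermediate iterates deviate by less than $\delta$ from the corresponding o.d.e.\ flow, $\bar x(t)\in N_\delta(H^{\epsilon+\Delta/2})$ for all $t\geq t_0+\gamma$ on $\mathcal{G}$.

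For the probability part, I would apply the lock-in theorem (taking $\delta_B/2K_T$ replaced, if necessary, by $\min(\delta,\delta_B/2K_T)$, with the affected constants absorbed into $c$ and $c_1$). The chain of bounds culminating in Lemmas \ref{error1} and \ref{convex} then gives
\[
\mathbb P[\mathcal{G}^c\mid x_{n_0}\in B]\;\leq\;\sum_{i\geq 0}\mathbb P\!\left[\rho_i\geq\delta\,\big|\,\mathcal{B}_{i-1}\right]\;\leq\; c_1\exp\!\left(-\frac{c\,\delta^{2/3}}{\sqrt[4]{b(n_0)}}\right)
\]
for $n_0$ sufficiently large. Combining with the deterministic step completes the proof.

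The main obstacle is purely bookkeeping: reconciling the two distinct roles of $\delta$ (the one making the Lyapunov step work, and the one appearing in the lock-in estimate of the previous theorem) and verifying that $\gamma$ genuinely majorises the time-to-trapping uniformly over initial positions in $B$. Crucially, no new probabilistic ingredient beyond Section 5 is needed; the tail estimate \eqref{tail} enters only through the previous theorem, so the same exponent $\delta^{2/3}/\sqrt[4]{b(n_0)}$ is inherited directly by the sample complexity bound.
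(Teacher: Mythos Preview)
Your proposal is correct and follows essentially the same approach as the paper: the deterministic Lyapunov descent on the event $\{\rho_i<\delta\ \forall i\}$ to obtain trapping in $N_\delta(H^{\epsilon+\Delta/2})$ within time $\gamma$, combined with the Section~5 error bound $\sum_i\mathbb P[\rho_i\geq\delta\mid\mathcal B_{i-1}]\leq c_1\exp(-c\delta^{2/3}/\sqrt[4]{b(n_0)})$, is exactly what the paper does. Your remark about reconciling the two roles of $\delta$ (and taking the smaller one, absorbing the change into the constants) is a valid observation that the paper itself leaves implicit.
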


\textbf{Acknowledgements:} The author would like to thank Prof.\ V.\
S.\ Borkar for much crucial help and advice while working on this
paper. The author also thanks two anonymous reviewers for pointing
out a few references, for suggesting some changes to the
presentation, and for the statement and proof of
Lemma~\ref{reviewer}.

\end{document}